\renewcommand{\Re}{\mathrm{Re}}
\renewcommand{\Im}{\mathrm{Im}}
\renewcommand{\phi}{\varphi}
\newcommand{\quotes}[1]{``#1''}
\newcommand{\st}{ \ \mathrm{s.t.} \ }
\theoremstyle{definition}
\renewcommand{\proofname}{Proof}
\newcommand{\defname}{Definition}
\newtheorem{Def}{\defname}
\newcommand{\propname}{Proposition}
\newtheorem{prop}{\propname}
\newcommand{\theoname}{Theorem}
\newtheorem{thm}{\theoname}
\newcommand{\corname}{Corollary}
\newtheorem{cor}{\corname}
\DeclareMathOperator{\ord}{ord}
\newcommand{\set}[1]{\{ #1 \}}
\newcommand{\del}{\partial}
\newcommand{\R}{\mathcal{R}}
\newcommand{\GL}{\mathrm{GL}}
\newcommand{\RR}{\mathbf{R}}
\newcommand{\CC}{\mathbf{C}}
\newcommand{\zero}{\mathbf{0}}
\newcommand{\m}{\mathfrak{m}}
\newcommand{\E}{\mathcal{E}}
\newcommand{\Harm}{\mathcal{H}}
\title{Singularities of functions with harmonic leading terms of two variables}
\author{Yuki Yasuda}
\begin{document}
\maketitle

\begin{abstract}
We study the classification problem of singularities of function-germs with harmonic leading terms
  of two variables under the right-equivalence.
We study the classification in the cases that the order of function-germs is at most 7.
Moreover, we observe that the multiple actions of Laplacian appear for the classifications
  of such class of function-germs (\theoname{s}\  \ref{main-5} - \ref{main-7}).
\end{abstract}

\section{Introduction}

In this paper, we study the classification of singularities of function-germs of two variables with
  a non-zero harmonic leading term under the right-equivalence.

We study the classification in the cases that the order of function-germs is at most 7.
Moreover, we observe that the multiple actions of Laplacian appear for the classifications
  of such class of function-germs (see \theoname{s}\  \ref{main-5} - \ref{main-7}).

For example, we prove that singularities of functions with harmonic leading
  terms of order 5 are completely classified by the action of $\Delta^3$ to the term of
  degree 6.

Recall that a function $h$ on $\RR^n$ is called a harmonic function if it satisfies the Laplace
equation $\Delta h = 0$, where $\Delta = \sum_{j=1}^n \frac{\del^2}{\del x_j^2}$ denotes the
  Laplacian.
In this paper, we study singularities of functions of two variables $x$ and $y$.
Therefore we put $\Delta = \frac{\del^2}{\del x^2} + \frac{\del^2}{\del y^2}$.

Recall that any harmonic function-germ on $(\RR^2, \zero)$ is a real part of
a halomorphic function on $(\CC, 0)$.

Thoughout this paper, we use the notation for the harmonic polynomials of special type:
\[
  f_k = \Re (x + iy)^k, \quad
  g_k = \Im (x + iy)^k = \Re (-i(x+iy)^k),
\]
where $i=\sqrt{-1}$ and $\Re$ (resp. $\Im$) means the real (resp. imaginary) part.

Let $\Harm_k(\RR^2)$ denote the vector space of all homogeneous harmonic polynomial-germs
  from $(\RR^2,\zero)$ to $(\RR, \zero)$ of degree $k$.
  It is known that $\Harm_k(\RR^2)$ is spanned by $f_k$ and $g_k$.
  In particular, $\dim_\RR \Harm_k(\RR^2) = 2$.

By the following propositon, we obtain the right-classification of
  harmonic polynomial space $\Harm_k(\RR^2)\setminus \set{0}$.
This proposition provides the motivation to classify function-germs $h \colon (\RR^2,\zero) \to (\RR,\zero)$ with
  harmonic leading terms with real variables under right equivalent.

\begin{prop} \label{classification-harmonic}
Let $h \colon (\RR^2,\zero) \to (\RR,\zero)$ be a harmonic function-germ, and $k$ be a natural
  number.
Suppose that the order of $h$ at the origin is equal to $k$.
Then $h$ is right-equivalent to $f_k$ by a conformal diffeomorphism-germ $(\RR^2,\zero) \to
  (\RR^2, \zero)$.

In particular, any element $h \in \Harm_k(\RR^2)\setminus\set{0}$ is right equivalent to
  $f_k \in \Harm_k(\RR^2)\setminus \set{0}$ by a linear conformal diffeomorphism-germ.
\end{prop}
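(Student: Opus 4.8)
The plan is to transfer everything to the complex picture and solve it there. Identify $(\RR^2,\zero)$ with $(\CC,0)$ via $z = x+iy$, so that $f_k$ becomes the germ $w \mapsto \Re(w^k)$. By the fact recalled just before the proposition, write $h = \Re H$ for a holomorphic function-germ $H \colon (\CC,0) \to (\CC,0)$. The first step I would carry out is to check that $\ord_0 h = k$ forces $\ord_0 H = k$: if $H(z) = \sum_{n \geq m} a_n z^n$ with $a_m \neq 0$, then the degree-$m$ homogeneous part of $h$ is $\Re(a_m z^m)$, which in the notation $a_m = \alpha + i\beta$ equals $\alpha f_m - \beta g_m$; since $f_m$ and $g_m$ span $\Harm_m(\RR^2)$ and are linearly independent, this part is nonzero, so $\ord_0 h = m$ and hence $m = k$.

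The second step is to build the coordinate change. Since $\ord_0 H = k$, factor $H(z) = z^k u(z)$ with $u$ a holomorphic germ at $0$ and $u(0) \neq 0$. Because $u(0) \neq 0$, a branch of the $k$-th root is defined near $u(0)$, so there is a holomorphic germ $v$ with $v^k = u$ and $v(0) \neq 0$. Put $\zeta(z) = z\, v(z)$: then $\zeta$ is holomorphic, $\zeta(0) = 0$, and $\zeta'(0) = v(0) \neq 0$, so by the holomorphic inverse function theorem $\zeta$ is a biholomorphic germ $(\CC,0) \to (\CC,0)$, while by construction $H(z) = \zeta(z)^k$. The third step is then immediate: let $\Phi$ be $\zeta^{-1}$ regarded as a real-analytic diffeomorphism-germ $(\RR^2,\zero) \to (\RR^2,\zero)$; a biholomorphic germ is in particular conformal, and $h \circ \Phi = \Re\bigl(H \circ \zeta^{-1}\bigr) = \Re(w^k) = f_k$, which gives the first assertion.

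For the last assertion, if $h \in \Harm_k(\RR^2)\setminus\set{0}$ then $h = \Re(a z^k)$ for some $a \in \CC\setminus\set{0}$, so above $u \equiv a$ is constant, $v$ is a constant $k$-th root of $a$, and $\zeta(z) = v z$ is linear; hence $\Phi(w) = v^{-1} w$ is a linear conformal diffeomorphism-germ carrying $h$ to $f_k$.

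I do not expect a serious obstacle here; the two places that need a little care are the order computation in the first step — making sure the harmonic leading term of $h$ has exactly the same degree as the holomorphic order of $H$, which relies on the linear independence of $f_m$ and $g_m$ noted in the discussion of $\Harm_m(\RR^2)$ — and the verification that the constructed $\zeta$ is genuinely invertible at the origin, i.e. that $\zeta'(0) \neq 0$. Both are routine once the factorization $H = z^k u$ with $u(0)\neq 0$ is in hand.
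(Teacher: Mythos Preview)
Your proof is correct and follows essentially the same route as the paper: pass to the holomorphic picture via $h=\Re H$ and use that a holomorphic germ of order $k$ is holomorphically right-equivalent to $w\mapsto w^k$, then take real parts. The paper simply cites this classification of holomorphic germs from \cite{Bruce-1}, whereas you unpack it by the explicit $k$-th root factorization $H(z)=(z\,v(z))^k$, so your argument is a more self-contained version of the same idea.
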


\begin{proof}

It is known that for every holomorphic function germ $h_i \colon (\CC,0) \to (\CC, 0)
  (i=1, 2)$, they are holomorphically right-equivalent if and only if $\ord(h_1) = \ord(h_2)$ (see \cite{Bruce-1}).
By considering real part of $h_1$ and $h_2$, we obtain above statement.
\end{proof}

Note that for any harmonic function-germ $h$ and for any linear conformal transformation $\phi$,
  $h \circ \phi$ is a harmonic function-germ.

Let $h \colon (\RR^2,\zero) \to (\RR,0)$ be a $C^\infty$ function germ such
  that
\[
  h = h_k + R_{k+1},
\]
where $h_k$ is a non-zero homogeneous harmonic polynomial of degree $k$ and
  $\ord (R_{k + 1}) \ge k + 1$.

If $k=1$, since $h_1$ is a non-zero linear function, the classification of above function-germs
  is obtained by implicit function theorem i.e. $h$ is right-equivalent to $x$.

If $k=2$, since $h_2$ is a non-degenerate quadratic form, the classification of
  this class is obtained by Morse lemma i.e. $h$ is right-equivalent to $2xy$.

If $k=3$, the singularity of $h_3$ is called $D_4^{-}$. It is proved that $h$ is right
  equivalent to $x^2y - y^3$.

If $k=4$, the singularity of $h$ is of $X_{1, 0}$ type (see \cite{Arnold-1}). There exists a
  real number $a_0$ such that $a_0^2 \neq 4$ and $h$ is right equivalent to $x^4 - a_0x^2y^2 + y^4$
  (in fact, $a_0 = -6$) in our case.

If degree of leading terms is equal to 5, the singularity is called of  $N_{16}$ type
  (see \cite{Arnold-1}). There exists a real number $a, b, c$ such that $ab \neq 9$,
  $4(a^3 + b^3) + 27 - a^2b^2 - 18ab \neq 0$ and $h$ is right equivalent to
  $x^4y + ax^3y^2 + bx^2y^3 + xy^4 + cx^3y^3$.
Note that in \cite{Arnold-1}, several invariants such as multiplicity and moduli number are studied for
  $N_{16}$.

The following result gives the complete solution to our classification problem
  in the case $k=5$:
\begin{thm} \label{main-5}
\begin{enumerate}[(1)]
\item Any function-germ $h=h_5 + R_6\colon (\RR^2,\zero) \to (\RR,0)$ with non-\nobreak zero harmonic
  leading term $h_5$ of degree 5 with $\ord(R_6) \ge 6$, is right equivalent to
  $f_5 + \tilde{R}_6$ for some function-germ $\tilde{R}_6$ with $\ord({\tilde{R}_6})\ge 6$ by a
  linear conformal diffeomorphism-germ.
    \label{main-5-change_coord}
\item For any homogeneous polynomial $\rho_6$ of degree 6 and function-germ
  $R_7$ with $\ord(R_7) \ge 7$, the function-germ $f_5 + \rho_6 + R_7$ is right equivalent to
  $f_5 + \frac{c}{6!}x^6$ where $c=\Delta^3\rho_6 \in \RR$.
    \label{main-5-6}
\item For any $c, \tilde{c} \in \RR$, $f_5 + cx^6$ is right equivalent to $f_5 + \tilde{c}x^6$
  if and only if $c=\tilde{c}$.
    \label{main-5-uniqueness}
\end{enumerate}
\end{thm}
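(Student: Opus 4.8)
The proof splits into the three parts of the statement, with part~(\ref{main-5-6}) carrying essentially all of the content. Part~(\ref{main-5-change_coord}) is immediate from Proposition~\ref{classification-harmonic}: it provides a linear conformal diffeomorphism-germ $\phi$ with $h_5\circ\phi = f_5$, and since $\phi$ is a linear isomorphism it preserves orders, so $h\circ\phi = f_5 + R_6\circ\phi$ with $\ord(R_6\circ\phi)\ge 6$; take $\tilde R_6 := R_6\circ\phi$. The computational core of part~(\ref{main-5-6}) is a linear-algebra fact which I would establish in the complex coordinate $z = x+iy$, where $\Delta = 4\,\del_z\del_{\bar z}$. One has $\del_x f_5 = 5f_4$ and $\del_y f_5 = -5g_4$, so the Jacobian ideal $J(f_5)$ equals $\langle f_4, g_4\rangle = \langle z^4, \bar z^4\rangle$, and its degree-$6$ homogeneous component $J_6$ is the span of the monomials $z^j\bar z^{6-j}$ with $j\neq 3$ --- a $6$-dimensional subspace of the $7$-dimensional space $P_6$ of degree-$6$ homogeneous polynomials, missing only $z^3\bar z^3 = (x^2+y^2)^3$. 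Since $(\del_z\del_{\bar z})^3(z^j\bar z^{6-j})\neq 0$ forces $j=3$, we obtain $J_6 = \ker(\Delta^3\colon P_6\to\RR)$, so $\Delta^3$ realizes the one-dimensional quotient $P_6/J_6$; a direct expansion gives $\Delta^3(x^6) = 6!$, hence $\Delta^3\bigl(\tfrac{c}{6!}x^6\bigr) = c$.

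With this in hand, I would prove part~(\ref{main-5-6}) by the usual two-stage normal-form argument. First, a coordinate change $\phi = \mathrm{id} + (\xi,\eta)$ with $\xi,\eta$ homogeneous of degree $2$ satisfies $(f_5+\rho_6)\circ\phi \equiv f_5 + \rho_6 + \xi\,\del_x f_5 + \eta\,\del_y f_5 \pmod{\m^7}$, and $\xi\,\del_x f_5 + \eta\,\del_y f_5 = 5\xi f_4 - 5\eta g_4$ ranges over all of $J_6$ as $\xi,\eta$ vary; since $\rho_6 - \tfrac{c}{6!}x^6 \in \ker\Delta^3 = J_6$ (both terms have $\Delta^3$-value $c := \Delta^3\rho_6$), a suitable choice of $\xi,\eta$ carries $f_5 + \rho_6 + R_7$ to $f_5 + \tfrac{c}{6!}x^6 + R_7'$ with $R_7'\in\m^7$ (composing $R_7$ with $\phi$ keeps it in $\m^7$). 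Second, I claim $F := f_5 + \tfrac{c}{6!}x^6$ is $6$-determined for right-equivalence; by the Mather--Tougeron criterion it suffices to check $\m^7\subseteq\m^2 J(F)$ with $J(F) = \langle 5f_4 + \tfrac{c}{5!}x^5,\ -5g_4\rangle$. One verifies that the degree-$7$ component of $\m^2 J(F)$ is already all of $P_7$: the degree-$7$ parts of $\m^2 g_4$ and $\m^2\bigl(5f_4+\tfrac{c}{5!}x^5\bigr)$ together produce $5Af_4 + Bg_4$ for arbitrary $A,B$ homogeneous of degree $3$, and in the $z,\bar z$ coordinates this sweeps out the full $8$-dimensional space $P_7$; since $\m^2 J(F)$ is an ideal containing $P_7$ it contains $\m^7$. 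Hence $F + R_7' \sim F$, proving part~(\ref{main-5-6}).

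For part~(\ref{main-5-uniqueness}) the \quotes{if} direction is trivial. For \quotes{only if}, suppose a diffeomorphism-germ $\psi$ satisfies $(f_5 + cx^6)\circ\psi = f_5 + \tilde c x^6$. Comparing the order-$5$ parts forces $f_5\circ L = f_5$ for $L := D\psi(0)$, and the linear maps fixing $f_5 = \Re z^5$ form a finite (order-$10$ dihedral) group, all of whose elements are orthogonal. Writing $\psi = L + (\text{quadratic}) + \cdots$ and taking $6$-jets, the degree-$6$ part of $(f_5+cx^6)\circ\psi$ equals $c\,(x^6\circ L)$ plus a term lying in $J(f_5\circ L)_6 = J_6 = \ker\Delta^3$; equating with the degree-$6$ part $\tilde c\, x^6$ of the right-hand side and applying $\Delta^3$, and using that $L$ orthogonal gives $\Delta^3(x^6\circ L) = (\Delta^3 x^6)\circ L = \Delta^3 x^6$ (a constant), we conclude $c = \tilde c$.

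The step I expect to demand the most care is the second stage of part~(\ref{main-5-6}): verifying, uniformly in $c$, that $f_5 + \tfrac{c}{6!}x^6$ is $6$-determined, i.e. pinning down the containment $\m^7\subseteq\m^2 J(F)$ and invoking the finite-determinacy theorem in exactly the form needed. Everything else reduces either to Proposition~\ref{classification-harmonic} or to a finite monomial computation in the $z,\bar z$ coordinates.
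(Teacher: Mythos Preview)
Your proof is correct and follows the same three-stage architecture as the paper's (normalize the leading term via a linear conformal map; reduce $\rho_6$ modulo the degree-$6$ piece of the Jacobian ideal by a near-identity change; kill the tail by finite determinacy; for uniqueness, use that the linear part of any equivalence lies in the dihedral symmetry group of $f_5$, hence is orthogonal and commutes with $\Delta$). The execution of part~(\ref{main-5-6}), however, is genuinely different. The paper writes down an explicit quadratic coordinate change with concrete rational coefficients in the $a_j$, then checks by direct expansion that the surviving $x^6$-coefficient is $a_0+a_6+\tfrac{1}{5}(a_2+a_4)=\tfrac{1}{6!}\Delta^3\rho_6$; it never explains \emph{why} $\Delta^3$ appears. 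Your argument in the $z,\bar z$ coordinates identifies $J_6=P_2\langle z^4,\bar z^4\rangle$ as exactly the span of the monomials $z^j\bar z^{6-j}$ with $j\neq 3$, hence as $\ker(\Delta^3\vert_{P_6})$, so that $\Delta^3$ is by construction the invariant realizing the one-dimensional quotient $P_6/J_6$. This is more conceptual and visibly generalizes to higher $k$, whereas the paper's explicit formula is more concrete but opaque. You are also more careful than the paper about the determinacy step: you verify $6$-determinacy of $f_5+\tfrac{c}{6!}x^6$ for every $c$, while the paper only states it for $f_5$ itself (Proposition~\ref{low-degree-determinancy}) and leaves the passage to general $c$ implicit.

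One small wrinkle: when $c\neq 0$ the elements $A\cdot(5f_4+\tfrac{c}{5!}x^5)$ with $A\in P_3$ are not homogeneous, so your sentence ``$\m^2 J(F)$ is an ideal containing $P_7$'' is literally true only modulo $\m^8$; you then need one application of Nakayama to pass from $\m^7\subset\m^2 J(F)+\m^8$ to $\m^7\subset\m^2 J(F)$. This is routine and does not affect the validity of the argument.
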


Other theorems (\theoname \ \ref{main-6}, \theoname \ \ref{main-7}) give pre-normal forms of
  function-germs in our classes for $k=6, 7$.
  These theorems give us hidden relations between singularities of functions of our type
  and actions of Laplacian of several times.

To state \theoname{s} \ \ref{main-6}, \ref{main-7} we recall basic terms of singularity theory.


We define $k$-jet of $h$ at the origin $j^kh(\zero)$ by the Taylor series of $h\colon (\RR^2,\zero)
  \to (\RR, 0)$ considered up to the degree $k$.

We say that two function germs $h_i (i=1, 2)$ are $k$-jet equivalent (written $h_1 \cong_{j^k} h_2$)
  if $j^k h_1(\zero) = j^k h_2(\zero)$.

We say that $h_i\colon (\RR^m\zero) \to (\RR^n, \zero)(i=1, 2)$ are $\R_k$ equivalent (written $h_1
  \cong_{\R_k} h_2$) if there exists a diffeomorphism-germ $\tau \colon (\RR^m,\zero) \to
  (\RR^m,\zero)$ and $\tilde{h}_2\colon (\RR^m, \zero) \to (\RR^n, \zero)$
  such that $h_2$ and $\tilde{h}_2$ are $k$-jet equivalent and $h_1$ is right-equivalent to
  $\tilde{h}_2$.

\begin{thm} \label{main-6}
\begin{enumerate}[(1)]
\item Any function germ $h=h_6 + R_7 \colon (\RR^2,\zero) \to (\RR,0)$ with non-zero harmonic
  leading term $h_6$ of degree 6 with $\ord(R_7) \ge 7$ is right equivalent to $g_6 + \tilde{R}_7$
  for some function-germ $\tilde{R}_7$ with $\ord(\tilde{R}_7) \ge 7$ by a conformal
  diffeomorphism-germ.
    \label{main-6_change_leading}
\item For any homogeneous polynomial $\rho_7$ of degree 7 and function-germ $R_8$ with
  $\ord(R_8) \ge 8$, $g_6 + \rho_7 + R_8$ is $\R_7$-equivalent to $g_6 + \frac{c_1}{7!}x^7 +
    \frac{c_2}{6!1!}x^6y$ where
          $c_1 = \frac{\del}{\del x}\Delta^3 \rho_7 \in \RR$,
          $c_2 = \frac{\del}{\del y}\Delta^3 \rho_7 \in \RR$.
      \label{main-6_7}
\item
  For any homogeneous polynomial $\rho_7, \rho_8$ of degree 7 and 8 and function-germ $R_8$
    with $\ord(R_8) \ge 8$,
    $g_6 + \rho_7 + \rho_8$ is $\R_8$- equivalent to $g_6 + \rho_7 + \frac{c}{8!}x^8$ where
    $c=\Delta^4 \rho_8 \in \RR$.
    \label{main-6_8}
\end{enumerate}
\end{thm}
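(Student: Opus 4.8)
The overall plan is to convert coordinate changes into linear algebra modulo the Jacobian ideal of the normalised leading term, and to recognise that the relevant graded pieces of that ideal are precisely kernels of iterated Laplacians. For part~(1) I would first invoke Proposition~\ref{classification-harmonic}: the non-zero harmonic leading term $h_6$ is right-equivalent, by a linear conformal germ, to $f_6 = \Re(x+iy)^6$, and the rotation $(x+iy)\mapsto e^{-i\pi/12}(x+iy)$ --- itself linear conformal --- carries $f_6$ to $g_6 = \Im(x+iy)^6$. Composing, there is a linear conformal $\Phi$ with $h_6\circ\Phi = g_6$; being linear, $\Phi$ preserves orders, so $h\circ\Phi = g_6 + \tilde R_7$ with $\tilde R_7 = R_7\circ\Phi$ of order $\ge 7$.

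For (2) and (3) the engine is an elementary jet computation. If $\tau = \mathrm{id}+\psi$ with $\psi=(\psi_1,\psi_2)$ homogeneous of degree $r\ge 2$, then $\tau$ is a diffeomorphism-germ and, comparing Taylor expansions,
\[
  j^{5+r}\!\left(g_6\circ\tau\right) = g_6 + \partial_x g_6\cdot\psi_1 + \partial_y g_6\cdot\psi_2,
\]
whereas $j^{5+r}(P\circ\tau) = j^{5+r}P$ for every germ $P$ of order $\ge 7$. Since $\partial_x g_6,\partial_y g_6$ generate the same ideal $J:=(g_5,f_5)$, a degree-$r$ perturbation modifies the degree-$(5+r)$ part of $g_6+(\text{higher order})$ by an arbitrary element of the degree-$(5+r)$ component $J_{5+r}$ of $J$, and disturbs nothing of degree $\le 5+r$ apart from $g_6$ itself. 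With $r=2$ the $7$-jet of $(g_6+\rho_7+R_8)\circ\tau$ is $g_6 + \rho_7 + \partial_x g_6\psi_1 + \partial_y g_6\psi_2$; with $r=3$ the $8$-jet of $(g_6+\rho_7+\rho_8)\circ\tau$ is $g_6 + \rho_7 + \rho_8 + \partial_x g_6\psi_1 + \partial_y g_6\psi_2$ (here $\rho_7$ is left alone, as it would only move at degree $9$). By the definition of $\R_k$-equivalence it is then enough to pick $\psi$ so that the new degree-$7$ (resp.\ degree-$8$) homogeneous part is the prescribed normal form; equivalently, so that $\rho_7$ (resp.\ $\rho_8$) differs from that form by an element of $J_7$ (resp.\ $J_8$).

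The substance is to compute $J_7$, $J_8$ and to see the Laplacians emerge. In complex coordinates $z=x+iy$, $\bar z=x-iy$ one has $J\otimes\CC=(z^5,\bar z^5)$, so its degree-$d$ part is spanned by the monomials $z^a\bar z^b$ with $a+b=d$ and $a\ge 5$ or $b\ge 5$; on the other hand $\Delta=4\,\partial_z\partial_{\bar z}$ for the Wirtinger derivatives $\partial_z,\partial_{\bar z}$, so $\Delta^m(z^a\bar z^b)=0$ exactly when $a<m$ or $b<m$. For $(d,m)=(7,3)$ and $(d,m)=(8,4)$ these two sets of monomials coincide, and since $J$ and $\ker\Delta^m$ are both conjugation-invariant the identity $J_d=\ker\Delta^{d-4}$ on degree-$d$ homogeneous polynomials already holds over $\RR$; in particular $J_7$ has codimension $2$ and $J_8$ codimension $1$. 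Because $\Delta^3 x^7 = 7!\,x$, $\Delta^3(x^6y)=6!\,y$ and $\Delta^4 x^8=8!$, the monomials $x^7,x^6y$ (resp.\ $x^8$) map under $\Delta^3$ (resp.\ $\Delta^4$) to a basis of the linear (resp.\ constant) polynomials, hence span a complement of $J_7$ (resp.\ $J_8$). Consequently $\rho_7-\tfrac{c_1}{7!}x^7-\tfrac{c_2}{6!}x^6y$, with $c_1=\partial_x\Delta^3\rho_7$ and $c_2=\partial_y\Delta^3\rho_7$, is annihilated by $\Delta^3$ and so lies in $J_7$; likewise $\rho_8-\tfrac{c}{8!}x^8$ with $c=\Delta^4\rho_8$ lies in $J_8$. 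Substituting these back into the coordinate changes above gives (2) and (3).

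I expect the only delicate point to be the bookkeeping behind the displayed identity and its use: one must check that a degree-$r$ perturbation alters the $(5+r)$-jet of $g_6+\rho_7+(\cdots)$ by exactly $\partial_x g_6\psi_1+\partial_y g_6\psi_2$ and touches nothing of degree $\le 5+r$ besides $g_6$ --- in particular that the prescribed term $\rho_7$ genuinely survives in (3) --- and that $f_5,g_5$ are coprime, which makes $(q_1,q_2)\mapsto f_5q_1+g_5q_2$ injective and hence fixes $\dim_\RR J_7=6$, $\dim_\RR J_8=8$, the reason the normal forms carry exactly two and one free parameters. The remaining steps are short explicit computations.
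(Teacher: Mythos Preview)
Your argument is correct, but it takes a genuinely different route from the paper's own proof. The paper proceeds by brute force: for each of parts~(\ref{main-6_7}) and~(\ref{main-6_8}) it simply writes down an explicit diffeomorphism-germ $\phi=\mathrm{id}+\psi$ with $\psi$ quadratic (resp.\ cubic), whose coefficients are displayed as specific rational linear combinations of the $a_j$ appearing in $\rho_7=\sum a_j x^{7-j}y^j$ (resp.\ $\rho_8$), and then asserts that a ``straightforward computation'' yields the stated normal form and that the surviving coefficients equal $\tfrac{1}{7!}\partial_x\Delta^3\rho_7$, $\tfrac{1}{6!}\partial_y\Delta^3\rho_7$ (resp.\ $\tfrac{1}{8!}\Delta^4\rho_8$). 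No structural explanation is offered for why the Laplacian appears.

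Your approach replaces this numerical verification by a short piece of linear algebra: you identify the space of degree-$(5+r)$ corrections achievable by $\tau=\mathrm{id}+\psi$ with the graded piece $J_{5+r}$ of the Jacobian ideal $J_{g_6}=(f_5,g_5)$, and then, passing to complex coordinates where $J\otimes\CC=(z^5,\bar z^5)$ and $\Delta=4\,\partial_z\partial_{\bar z}$, you observe that $J_7=\ker\Delta^3$ and $J_8=\ker\Delta^4$ on homogeneous polynomials of the relevant degree. This not only avoids the explicit coefficient lists but actually explains the phenomenon the paper is recording; it also makes transparent why the normal forms carry exactly two and one parameters, via the codimension count you give. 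The paper's approach, on the other hand, has the virtue of producing the normalising diffeomorphism explicitly, which your existence argument does not.
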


\begin{thm} \label{main-7}
\begin{enumerate}[(1)]
\item Any function-germ $h=h_7 + R_8$ with non-zero harmonic leading term $h_7$ of degree 7 and
  $\ord(R_8) \ge 8$ is right equivalent to $g_7 + \tilde{R}_8$ for some function-germ $\tilde{R}_8$
  with $\ord (\tilde{R}_8)\ge 8$ by a linear conformal diffeomorphism-germ.
  \label{main-7_change_leading}
\item For any homogeneous polynomial $\rho_8$ and fucntion-germ $R_9$ with $\ord (R_9)\ge 9$
  $g_7 + \rho_8$ is $\R_8$-equivalent to $g_7 + \frac{c_1}{8!}x^8 + \frac{c_2}{7!1!}x^7y
  + \frac{c_3}{6!2!}x^6y^2$,
    where homogeneous
    \begin{align*}
    c_1 &= (\frac{\del^2}{\del x^2}-3\frac{\del^2}{\del y^2})\Delta^3 \rho_8 \in \RR, &
    c_2 &= \frac{\del^2}{\del x \del y} \Delta^3 \rho_8 \in \RR, &
    c_3 &= \frac{\del^2}{\del y^2} \Delta^3 \rho_8 \in \RR.
    \end{align*} \label{main-7_8}
\item For any homogeneous polynomial $\rho_8$ and $\rho_9$ of degree 8 and 9 and function-germ
  $R_{10}$ with $\ord (R_{10})\ge 10$, $g_7 + \rho_8 + \rho_9 + R_{10}$ is $\R_9$-equivalent to
  $f_7 + \rho_8 + \frac{c_1}{9!}x^9 + \frac{c_2}{8!1!} x^8y$, where
  \begin{align*}
    c_1 &= \frac{\del}{\del x} \Delta^4 \rho_9 \in \RR, &
    c_2 &= \frac{\del}{\del y} \Delta^4 \rho_9 \in \RR
  \end{align*}
  \label{main-7_9}
\item For any homogeneous polynomial $\rho_8, \rho_9, \rho_{10}$ of degree 8, 9 and 10,
  $g_7 + \rho_8 + \rho_9 + \rho_{10}$ is $\R_{10}$-equivalent to $g_7 + \rho_8 + \rho_9 +
  \frac{c}{10!}x^{10}$, where
  \[
    c=\Delta^5 \rho_{10} \in \RR.
  \]
  \label{main-7_10}
\end{enumerate}
\end{thm}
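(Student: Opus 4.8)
\emph{Overall approach.} I will prove (1) by the same complex-analytic device that underlies Proposition~\ref{classification-harmonic}, and (2)--(4) by one uniform computation of the ``tangent space to the right-equivalence orbit'' at the relevant jet level. For (1): write $z=x+iy$, so that every nonzero element of $\Harm_7(\RR^2)$ is $h_7=\Re(cz^7)$ for some $c\in\CC\setminus\{0\}$. The linear conformal germ $z\mapsto\lambda z$ with $\lambda^7=-i/c$ carries $h_7$ to $\Re(-iz^7)=g_7$, and since an invertible linear substitution does not change the order of a remainder, $h=h_7+R_8$ is carried to $g_7+\tilde R_8$ with $\ord(\tilde R_8)\ge 8$; equivalently, Proposition~\ref{classification-harmonic} makes $h_7$ linearly conformally right-equivalent to $f_7$, and a rotation by $-\pi/14$ sends $f_7$ to $g_7$. (The target displayed in (3) reads $f_7+\cdots$, which appears to be a typo for $g_7+\cdots$, since no diffeomorphism fixing the $8$-jet of $g_7+\rho_8$ can turn $g_7$ into $f_7$; I will prove the $g_7$ version, the argument being the same.)

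\emph{The mechanism for (2)--(4).} First compute $\del g_7/\del x=7g_6$ and $\del g_7/\del y=7f_6$. Fix $d\in\{8,9,10\}$ and a germ $g_7+\rho_8+\cdots+\rho_d+(\text{terms of degree}>d)$. A polynomial coordinate change $\tau=\mathrm{id}+(\phi,\psi)$ with $\phi,\psi$ homogeneous of degree $d-6\ge 2$ (a diffeomorphism-germ, its linear part being the identity) shifts the leading term $g_7$ upward by exactly $d-7$ degrees: no term of degree $<d$ is affected, the corrections to $\rho_8,\dots,\rho_{d-1}$ and to the higher-order remainder land in degree $>d$, and the degree-$d$ part changes by $\del_x g_7\cdot\phi+\del_y g_7\cdot\psi=7(g_6\phi+f_6\psi)$. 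Writing $S_k$ for the space of homogeneous forms of degree $k$ in $x,y$ and $M_d:=f_6 S_{d-6}+g_6 S_{d-6}$ (the degree-$d$ graded piece of the ideal $(f_6,g_6)\subseteq\RR[x,y]$), it follows that the given germ is $\R_d$-equivalent to $g_7+\rho_8+\cdots+\rho_{d-1}+\rho_d'$ for every $\rho_d'\equiv\rho_d\pmod{M_d}$: since $\R_d$-equivalence only constrains $d$-jets, one may take the diffeomorphism to be $\tau$ and the auxiliary germ $\tilde h_2$ to be the transformed germ itself, discarding everything of degree $>d$ (as well as the spurious summands $R_9$, $R_{10}$ in (2), (3)).

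\emph{Normal forms and the Laplacian.} It remains to compute $S_d/M_d$. Since $f_6+ig_6=z^6$ and $f_6-ig_6=\bar z^6$ are coprime in $\CC[x,y]$, the ideal $(f_6,g_6)\otimes\CC=(z^6,\bar z^6)$ is a complete intersection, so $\CC[x,y]/(z^6,\bar z^6)$ has Hilbert series $(1+t+\cdots+t^5)^2$ and $\dim_\RR(S_d/M_d)$ is $3,2,1$ for $d=8,9,10$. Next, $\Delta^{d-5}$ annihilates $M_d$: using the Wirtinger derivatives one has $\Delta^{d-5}=4^{d-5}\del_z^{d-5}\del_{\bar z}^{d-5}$, every element of $z^6S_{d-6}$ has $\bar z$-degree $\le d-6<d-5$, and every element of $\bar z^6S_{d-6}$ has $z$-degree $\le d-6<d-5$. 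Hence $\Delta^{d-5}$ descends to a linear map $S_d/M_d\to S_{10-d}$ between spaces of the same dimension, and the computations $\Delta^3x^8=20160\,x^2$, $\Delta^3x^7y=5040\,xy$, $\Delta^3x^6y^2=2160\,x^2+720\,y^2$ ($d=8$), $\Delta^4x^9=9!\,x$, $\Delta^4x^8y=8!\,y$ ($d=9$), $\Delta^5x^{10}=10!$ ($d=10$) give linearly independent images, so this map is an isomorphism and $\{x^8,x^7y,x^6y^2\}$, $\{x^9,x^8y\}$, $\{x^{10}\}$ represent bases of the respective quotients. Finally, writing $\nu_d$ for the stated normal form and using $\Delta^{d-5}\rho_d=\Delta^{d-5}\nu_d$, the displayed operators just read off the coefficients of $\nu_d$: for instance $\Delta^3\nu_8=(\tfrac{c_1}{2}+\tfrac{3c_3}{2})x^2+c_2\,xy+\tfrac{c_3}{2}y^2$ gives $c_3=\del_y^2\Delta^3\rho_8$, $c_2=\del_x\del_y\Delta^3\rho_8$, $c_1=(\del_x^2-3\del_y^2)\Delta^3\rho_8$, and similarly $\Delta^4\nu_9=c_1x+c_2y$ and $\Delta^5\nu_{10}=c$ yield the formulas in (3) and (4).

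\emph{Where the difficulty lies.} Everything except the middle step is routine: (1) is elementary complex analysis, and the passage from the infinitesimal computation to honest $\R_d$-equivalence is free because jet truncation is built into the definition of $\R_d$. The real content is recognizing that the orbit-tangent module at level $d$ is exactly $(f_6,g_6)_d$, that it is killed precisely by the $(d-5)$-fold Laplacian, and that $\Delta^{d-5}$ implements the canonical duality $S_d/M_d\cong S_{10-d}$ coming from the Gorenstein (complete-intersection) structure of $\CC[x,y]/(z^6,\bar z^6)$; this is what forces exactly $3,2,1$ moduli in degrees $8,9,10$ and produces the explicit differential-operator formulas for the $c_i$.
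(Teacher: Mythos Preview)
Your proof is correct, and it takes a genuinely different route from the paper's.

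The paper proves (2)--(4) by brute force: for each part it writes down an explicit diffeomorphism-germ $\phi=\mathrm{id}+(\text{homogeneous terms of degree }d-6)$ whose coefficients are specific linear combinations of the coefficients $a_j$ of $\rho_d$, and then states that a direct computation yields the displayed normal form. No conceptual explanation is offered for why the Laplacian powers appear.

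Your argument replaces all of this by one uniform calculation. You identify the degree-$d$ piece of the orbit tangent space as $M_d=(f_6,g_6)_d$ via $\partial_x g_7=7g_6$, $\partial_y g_7=7f_6$; you compute $\dim S_d/M_d$ from the Hilbert series of the complete intersection $\CC[z,\bar z]/(z^6,\bar z^6)$; and you show, via Wirtinger derivatives, that $\Delta^{d-5}$ annihilates $M_d$ and induces an isomorphism $S_d/M_d\to S_{10-d}$. This simultaneously proves the existence of the normal form, pins down the number of moduli ($3,2,1$ for $d=8,9,10$), and \emph{explains} why iterated Laplacians govern the invariants---they are exactly the differential operators that kill the orbit tangent module. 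Your observation about the Gorenstein duality is the real reason the pattern works and would extend to general $k$.

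What the paper's approach buys is explicitness: it hands you the actual diffeomorphism, which your argument only guarantees to exist. What your approach buys is insight and economy: a single dimension count and a single annihilation lemma replace three pages of coefficient tables, and the method visibly generalizes (the same argument with $g_7$ replaced by $g_k$ gives $M_d=(f_{k-1},g_{k-1})_d$ and the invariant $\Delta^{d-k+2}$). Your remark that the $f_7$ in part (3) must be a typo for $g_7$ is also correct.
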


To conclude this section, we introduce a related notion and we give a \corname \ of our
  classification results.

\begin{Def}[see \cite{polyharm}]
Let $l$ be a non-zero interger.
A function germ $h \colon (\RR^2, \zero) \to (\RR, \zero)$ is called an
  {\it l-harmonic function-germ} of degree $l$ if $h$ satisfies the following equation:
\[
  \Delta^l h = 0.
\]

Note that we use notation \quotes{homogeneous $l$-harmonic polynomial of degree $k$} for $l$-harmonic
  function and homogeneous function of degree $k$.
\end{Def}

\begin{cor}
Let $h_5$ be a non-zero harmonic polynomial of degree 5 and $\rho_6 \colon (\RR^2,\zero) \to
  (\RR, 0)$ be
  a homogeneous $3$-harmonic polynomial of degree $6$.
  Then $h_5 + \rho_6$ is right equivalent to $h_5$.
  \label{poly-5}
\end{cor}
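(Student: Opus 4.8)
The plan is to reduce everything to Theorem~\ref{main-5} after normalizing the leading term. First, by Proposition~\ref{classification-harmonic} there is a linear conformal diffeomorphism-germ $\phi\colon(\RR^2,\zero)\to(\RR^2,\zero)$ with $h_5\circ\phi=f_5$ (if the proposition supplies the map in the other direction, replace it by its inverse, which is again linear and conformal). Since $\phi$ is linear and $\rho_6$ is homogeneous of degree $6$, the pull-back $\rho_6\circ\phi$ is again a homogeneous polynomial of degree $6$, so $(h_5+\rho_6)\circ\phi=f_5+(\rho_6\circ\phi)$. Hence $h_5+\rho_6$ is right-equivalent to $f_5+\tilde\rho_6$ with $\tilde\rho_6:=\rho_6\circ\phi$.

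Next I would verify that $\tilde\rho_6$ is still $3$-harmonic. Identifying $\RR^2$ with $\CC$, a linear conformal map has the form $z\mapsto\alpha z$ or $z\mapsto\alpha\bar z$ for some $\alpha\in\CC^\times$, and in either case a short direct computation gives $\Delta(u\circ\phi)=|\alpha|^2\,(\Delta u)\circ\phi$ for every $C^2$ germ $u$. Iterating three times yields $\Delta^3(\rho_6\circ\phi)=|\alpha|^6\,(\Delta^3\rho_6)\circ\phi$, which vanishes because $\rho_6$ is a homogeneous $3$-harmonic polynomial, i.e. $\Delta^3\rho_6=0$. Thus $\Delta^3\tilde\rho_6=0$.

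Now I would apply Theorem~\ref{main-5}\,(\ref{main-5-6}) to $f_5+\tilde\rho_6+R_7$ with $R_7=0$: this germ is right-equivalent to $f_5+\frac{c}{6!}x^6$ where $c=\Delta^3\tilde\rho_6=0$, that is, to $f_5$. Chaining the equivalences gives $h_5+\rho_6$ right-equivalent to $f_5$, and since $h_5$ is itself right-equivalent to $f_5$ by Proposition~\ref{classification-harmonic}, transitivity of right-equivalence yields $h_5+\rho_6$ right-equivalent to $h_5$, as claimed. The only nonroutine ingredient is the transformation rule $\Delta(u\circ\phi)=|\alpha|^2(\Delta u)\circ\phi$ for conformal linear $\phi$, which is what makes $3$-harmonicity a coordinate-free condition here; everything else is a direct invocation of results already established, so I expect no real obstacle.
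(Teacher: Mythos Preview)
Your proof is correct and follows essentially the same route as the paper: normalize the leading harmonic term via a linear conformal map, observe that such maps preserve the condition $\Delta^3\rho_6=0$, and then invoke Theorem~\ref{main-5}(\ref{main-5-6}) to kill the degree-$6$ part. If anything you are slightly more careful than the paper, which only mentions rotations, whereas you handle the full conformal linear case $z\mapsto\alpha z$ or $z\mapsto\alpha\bar z$ via the transformation rule $\Delta(u\circ\phi)=|\alpha|^2(\Delta u)\circ\phi$, and you explicitly close the loop back to $h_5$ by transitivity.
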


\begin{proof}
Let $\rho_6$ be a homogeneous $3$-harmonic polynomial with degree $6$ i.e. $\Delta^3 \rho_6 = 0$.

By \theoname \ \ref{main-5} (\ref{main-5-change_coord}), there exists a rotation $R_{\theta} \colon
 (\RR^2, \zero) \to (\RR^2, \zero)$ and $c \in \RR$ such that
  $h_5 + \rho_6$ and $f_5 + c \rho_6 $ are right-equivalent.

Since rotation preserve Laplacian, $\Delta^3 (c\rho_6 \circ R_\theta) = c \Delta^3(\rho_6) = 0$.
Thus $h_5 + \rho_6$ is right-equivalent to $f_5$ by \theoname \ \ref{main-5} (\ref{main-5-6}).
\end{proof}

If $k=6$ or $7$, we obtain statements similar to \corname \ \ref{poly-5}:

\begin{cor}
Let $h_6$ be a non-zero harmonic polynomial of degree $6$.
\begin{enumerate}[1)]
\item Let $\rho_7$ be a homogeneous $3$-harmonic function of degree 7.
  Then $h_6 + \rho_7$ is $\R_7$ equivalent to $h_6$.
\item Let $\rho_8$ be a homogeneous $4$-harmonic function of degree 8.
  Then $h_6 + \rho_8$ is $\R_8$ equivalent to $h_6$.
\end{enumerate}
\end{cor}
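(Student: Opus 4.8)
The plan is to follow the proof of \corname \ \ref{poly-5} almost verbatim, with \theoname \ \ref{main-5} replaced by \theoname \ \ref{main-6} and using the two-variable fact that precomposing a harmonic germ with a conformal germ again gives a harmonic germ. So I would first fix, once and for all, a conformal diffeomorphism-germ $\phi$ supplied by \theoname \ \ref{main-6} (\ref{main-6_change_leading}) with $h_6\circ\phi = g_6 + \tilde R_7$, $\ord(\tilde R_7)\ge 7$, and write $\tilde R_7 = A_7 + A_8 + R_9$ with $A_j$ homogeneous of degree $j$ and $\ord(R_9)\ge 9$. Since $\phi$ is holomorphic or anti-holomorphic and $h_6$ is harmonic, $h_6\circ\phi$ is harmonic, so each $A_j$ is harmonic; in particular $\Delta^3 A_7 = 0$ and $\Delta^4 A_8 = 0$. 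Setting $L = d\phi_\zero = \lambda O$ with $\lambda>0$ and $O$ orthogonal, I would also record the covariance $\Delta^m(p\circ L) = \lambda^{2m}(\Delta^m p)\circ L$ for every polynomial $p$ and every $m$ (orthogonal invariance of $\Delta$ together with its scaling under dilations). These two elementary facts are all the argument uses beyond \theoname \ \ref{main-6}.

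For 1), precomposing $h_6+\rho_7$ with the same $\phi$ gives $(h_6+\rho_7)\circ\phi = g_6 + (A_7 + \rho_7\circ L) + R_8$ with $\ord(R_8)\ge 8$, and the degree-$7$ term satisfies $\Delta^3(A_7 + \rho_7\circ L) = \Delta^3 A_7 + \lambda^6(\Delta^3\rho_7)\circ L = 0$ because $\rho_7$ is $3$-harmonic. Then \theoname \ \ref{main-6} (\ref{main-6_7}) yields $h_6+\rho_7 \cong_{\R_7} g_6 + \frac{c_1}{7!}x^7 + \frac{c_2}{6!1!}x^6y$ with $c_1 = \frac{\del}{\del x}\Delta^3(A_7+\rho_7\circ L) = 0$ and $c_2 = \frac{\del}{\del y}\Delta^3(A_7+\rho_7\circ L) = 0$, i.e.\ $h_6+\rho_7 \cong_{\R_7} g_6$; the same computation with $\rho_7 = 0$ (where already $\Delta^3 A_7 = 0$) gives $h_6 \cong_{\R_7} g_6$. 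Since $\R_7$-equivalence is easily seen to be an equivalence relation containing right-equivalence, $h_6+\rho_7 \cong_{\R_7} h_6$.

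For 2), precomposing $h_6+\rho_8$ with the same $\phi$ gives $(h_6+\rho_8)\circ\phi = g_6 + A_7 + (A_8 + \rho_8\circ L) + R_9$ with $\ord(R_9)\ge 9$, and $\Delta^4(A_8 + \rho_8\circ L) = \Delta^4 A_8 + \lambda^8(\Delta^4\rho_8)\circ L = 0$ because $A_8$ is harmonic and $\rho_8$ is $4$-harmonic. Then \theoname \ \ref{main-6} (\ref{main-6_8}) yields $h_6+\rho_8 \cong_{\R_8} g_6 + A_7 + \frac{c}{8!}x^8$ with $c = \Delta^4(A_8+\rho_8\circ L) = 0$, i.e.\ $h_6+\rho_8 \cong_{\R_8} g_6 + A_7$. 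Running the identical reduction for $h_6$ through the very same $\phi$ gives $h_6 \cong_{\R_8} g_6 + A_7$, so $h_6+\rho_8 \cong_{\R_8} h_6$ by transitivity and symmetry of $\R_8$-equivalence.

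I expect the work to be bookkeeping rather than anything deep. The two points to be careful about are: (i) that conformal precomposition preserves harmonicity and that $\Delta^m$ is conformally covariant up to the scalar $\lambda^{2m}$ — both elementary but essential, since they are exactly what keeps the perturbation $3$- (resp.\ $4$-) harmonic after the change of coordinates; and (ii) in 2), that \theoname \ \ref{main-6} (\ref{main-6_8}) normalizes only the degree-$8$ jet and leaves the harmonic degree-$7$ term $A_7$ untouched, so the reductions of $h_6+\rho_8$ and of $h_6$ must be performed through one and the same conformal germ $\phi$ in order for the surviving terms to match and transitivity to close the argument. With that observed, the proof is the same as that of \corname \ \ref{poly-5}.
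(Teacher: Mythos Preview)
Your proof is correct and follows the same strategy the paper intends (the paper gives no separate proof here, only remarks that the argument is analogous to that of \corname~\ref{poly-5}). One simplification you overlooked: since $h_6$ is a \emph{homogeneous} harmonic polynomial of degree $6$, \propname~\ref{classification-harmonic} (which is also what the proof of \theoname~\ref{main-6} (\ref{main-6_change_leading}) actually invokes) supplies a \emph{linear} conformal $\phi$ with $h_6\circ\phi = g_6$ exactly, so your auxiliary terms $A_7$ and $A_8$ are identically zero and all the bookkeeping about harmonicity of $A_j$ and about using ``the very same $\phi$'' in part~2) disappears. With that observation your argument collapses to the one-line computation $\Delta^3(\rho_7\circ\phi)=\lambda^6(\Delta^3\rho_7)\circ\phi=0$ (resp.\ $\Delta^4(\rho_8\circ\phi)=0$) followed by a direct application of \theoname~\ref{main-6} (\ref{main-6_7}) (resp.\ (\ref{main-6_8})), exactly as in \corname~\ref{poly-5}.
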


\begin{cor}
Let $h_7$ be a non-zero harmonic polynomial of degree $7$.
\begin{enumerate}[1)]
\item Let $\rho_8$ be a homogeneous $3$-harmonic function of degree $8$.
  Then $h_7 + \rho_8$ is $\R_8$ equivalent to $h_7$.
\item Let $\rho_9$ be a homogeneous $4$-harmonic function of degree $9$.
  Then $h_7 + \rho_9$ is $\R_9$ equivalent to $h_7$.
\item Let $\rho_{10}$ be a homogeneous $5$-harmonic function of degree $10$.
  Then $h_7 + \rho_{10}$ is $\R_{10}$ equivalent to $h_7$.
\end{enumerate}
\end{cor}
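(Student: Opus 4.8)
The plan is to imitate the proof of \corname\ \ref{poly-5}, replacing \theoname\ \ref{main-5} by the corresponding items of \theoname\ \ref{main-7}. Write $\rho_m$ for the given homogeneous perturbation, so that $m=8,9,10$ in the three cases, and let $l$ denote its degree of polyharmonicity ($l=3,4,5$ respectively, i.e.\ $\Delta^{l}\rho_m=0$). First I would apply \theoname\ \ref{main-7} (\ref{main-7_change_leading}): there is a linear conformal diffeomorphism-germ $\phi\colon(\RR^2,\zero)\to(\RR^2,\zero)$ with $(h_7+\rho_m)\circ\phi=g_7+\tilde R_8$, $\ord(\tilde R_8)\ge 8$. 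Since $\phi$ is linear it preserves the homogeneous decomposition, so in fact $h_7\circ\phi=g_7$ and $\tilde R_8=\rho_m\circ\phi=:\tilde\rho_m$ is again a homogeneous polynomial of degree $m$; thus $h_7+\rho_m$ is right-equivalent to $g_7+\tilde\rho_m$, a perturbation of $g_7$ by a single homogeneous term of degree $m$.

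The one substantive point is that $\tilde\rho_m$ inherits the polyharmonicity of $\rho_m$. A linear conformal map of $\RR^2$ has the form $\phi=\lambda Q$ with $\lambda\neq 0$ and $Q$ orthogonal, so $\Delta(u\circ\phi)=\lambda^2(\Delta u)\circ\phi$ for every germ $u$, whence $\Delta^{l}\tilde\rho_m=\lambda^{2l}(\Delta^{l}\rho_m)\circ\phi=0$. Now I would apply the appropriate normal-form item of \theoname\ \ref{main-7}, taking the lower-degree homogeneous terms to be zero: item (\ref{main-7_8}) to $g_7+\tilde\rho_8$; item (\ref{main-7_9}), with $\rho_8=0$, to $g_7+\tilde\rho_9$; and item (\ref{main-7_10}), with $\rho_8=\rho_9=0$, to $g_7+\tilde\rho_{10}$. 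In each case every coefficient of the resulting normal form is a constant-coefficient differential operator applied to $\Delta^{l}\tilde\rho_m$, hence is zero. So $g_7+\tilde\rho_8$ is $\R_8$-equivalent to $g_7$, $g_7+\tilde\rho_9$ is $\R_9$-equivalent to $f_7$, and $g_7+\tilde\rho_{10}$ is $\R_{10}$-equivalent to $g_7$.

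It remains to chain the equivalences. In each case $h_7+\rho_m$ is right-equivalent to $g_7+\tilde\rho_m$, which is $\R_k$-equivalent to $g_7$ (in 1) and 3)) or to $f_7$ (in 2)), while $g_7$, resp.\ $f_7$, is right-equivalent to $h_7$ by \propname\ \ref{classification-harmonic}. Since right-equivalence is a particular case of $\R_k$-equivalence, and combining an $\R_k$-equivalence with a right-equivalence in either order again gives an $\R_k$-equivalence (immediate from the definition, $k$-jet equivalence being preserved under composition with a fixed diffeomorphism-germ), this yields $h_7+\rho_m\cong_{\R_k}h_7$ in all three cases. I do not anticipate a genuine obstacle here: apart from the elementary Laplacian-rescaling identity, the only care needed is the bookkeeping that after the linear change of coordinates the perturbation is still purely of degree $m$, so that \theoname\ \ref{main-7} applies with the intermediate homogeneous terms equal to zero.
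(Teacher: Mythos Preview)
Your argument is correct and is exactly the approach the paper intends: it gives no separate proof of this corollary, merely saying the statement is obtained ``similarly to \corname\ \ref{poly-5}'', and your write-up is precisely that adaptation (reduce $h_7$ to $g_7$ by a linear conformal map, note that such maps commute with $\Delta$ up to a scalar so polyharmonicity is preserved, then apply the relevant item of \theoname\ \ref{main-7} with the intermediate homogeneous pieces set to zero). Your care in handling the $f_7$ appearing in \theoname\ \ref{main-7}(\ref{main-7_9}) and in justifying the chaining of right- and $\R_k$-equivalences is more than the paper itself provides.
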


\section{Proof of Theorems}

To prove of Theorems \ref{main-5} - \ref{main-7}, we prepare some additional notation and statements.

Let $\E_n$ denote the ring consisting of $C^\infty$ map-germs $(\RR^n, \zero) \to \RR$ and
let $m_n \subset \E_n$ denote the ideal consisting of $C^\infty$ map-germs $(\RR^n,\zero)
  \to (\RR,\zero)$.

Let $h \colon (\RR^n,\zero) \to (\RR,\zero)$ be a $C^\infty$ function-germ.
Let $J_h$ denote the Jacobian ideal of $h$ i.e. $J_h
= \langle \frac{\del h}{\del x_i}| 1 \le i \le n \rangle_{\E_n}$.

We say that $h$ is $\R_k$- determined if $h$ is right-equivalent to some germ $\tilde{h}$ with
  $j^k \tilde{h}(\zero) = j^k h(\zero)$.

We recall a sufficient condition of $\R_k$-determancy.
For the proof, see \cite{Mother-3}.

\begin{prop}[see \cite{Mother-3}]  \label{judge-determinancy} 
Let $h \colon (\RR^n,\zero) \to (\RR,\zero)$ be a $C^\infty$ map-germ.
  Suppose for a natural number $k$
\[
  m_n^k \subset m_n J_h + m_n^{k+1}
\]
holds.

Then $h$ is $\R_k$-determined.
\end{prop}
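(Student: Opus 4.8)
The plan is to reduce the hypothesis to the classical Mather–Gaffney finite-determinacy condition $m_n^{k+1}\subset m_n^2 J_h$ by an application of Nakayama's lemma, and then to establish determinacy by the Thom–Levine homotopy (\quotes{path}) method.

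\emph{Step 1 (upgrading the hypothesis).} Since $m_n^{k+1}=m_n\cdot m_n^k\subset m_n^k$, the assumed inclusion $m_n^k\subset m_n J_h+m_n^{k+1}$ gives the equality of $\E_n$-modules $m_n^k+m_nJ_h=m_nJ_h+m_n^{k+1}$, and hence the finitely generated module $M:=(m_n^k+m_nJ_h)/m_nJ_h$ satisfies $M=m_nM$. By Nakayama's lemma over the local ring $(\E_n,m_n)$, $M=0$, i.e. $m_n^k\subset m_nJ_h$; multiplying by $m_n$ yields $m_n^{k+1}\subset m_n^2J_h$.

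\emph{Step 2 (the homotopy).} Let $g$ be any germ with $j^kg(\zero)=j^kh(\zero)$, so that $g-h\in m_n^{k+1}$. Consider the linear path $h_t:=h+t(g-h)$, $t\in[0,1]$, and seek a smooth family of diffeomorphism-germs $\phi_t\colon(\RR^n,\zero)\to(\RR^n,\zero)$ with $\phi_0=\mathrm{id}$ and $h_t\circ\phi_t=h$ for all $t$; specializing at $t=1$ will then show $g$ is right-equivalent to $h$. Differentiating in $t$, it suffices to find a smooth time-dependent vector-field germ $X_t$ with $X_t(\zero)=\zero$ solving the homological equation $\dot h_t+dh_t(X_t)=0$, where $\dot h_t=g-h$; this amounts to $g-h\in m_nJ_{h_t}$ for all $t$, with coefficients smooth in $(x,t)$. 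Because $\partial h_t/\partial x_i\equiv\partial h/\partial x_i\pmod{m_n^k}$, one has $m_n^2J_h\subset m_n^2J_{h_t}+m_n^{k+2}$, so combining with Step 1, $m_n^{k+1}\subset m_n^2J_{h_t}+m_n^{k+2}$; a parametrized version of Nakayama's lemma — worked over the ring of $C^\infty$ germs along $\{\zero\}\times[0,1]$, applied to the module generated by $m_n^{k+1}$ — removes the error term uniformly in $t$, giving $g-h\in m_n^2J_{h_t}$ with the coefficients depending smoothly on the parameter. Writing $g-h=\sum_i a_i(x,t)\,\partial h_t/\partial x_i$ with $a_i\in m_n$ produces the required $X_t=-(a_1,\dots,a_n)$.

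\emph{Step 3 (integration and conclusion).} The vector field $X_t$ is smooth in $(x,t)$ and vanishes at the origin, so it has a well-defined local flow; this yields germs $\phi_t$ fixing $\zero$ with $\phi_0=\mathrm{id}$, and $\frac{d}{dt}(h_t\circ\phi_t)=(\dot h_t+dh_t(X_t))\circ\phi_t=0$, whence $h_1\circ\phi_1=h_0=h$. Thus every $g$ with $j^kg(\zero)=j^kh(\zero)$ is right-equivalent to $h$, so $h$ is $\R_k$-determined. The main obstacle is the parametrized Nakayama step together with the $C^\infty$-solvability of the homological equation: one must verify that the division producing the coefficients $a_i(x,t)$ can be performed with smooth dependence on $t$, which is exactly where the relative form of Nakayama's lemma (equivalently, a Malgrange-type division over $\E_{n+1}$) enters; the remaining ingredients — ODE existence for the flow and the Taylor-remainder bookkeeping in Step 2 — are routine.
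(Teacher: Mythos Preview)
Your argument is essentially correct and follows the classical Mather route; there is nothing to compare against in the paper itself, since the paper does not prove this proposition at all --- it merely states it and defers to \cite{Mother-3} with the line ``For the proof, see \cite{Mother-3}.'' What you have written is precisely the standard proof one finds in that reference (Nakayama to upgrade the inclusion, Thom--Levine homotopy, then integration of the resulting time-dependent vector field), so in that sense you are reproducing the cited source rather than diverging from the paper.

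One small point worth tightening: in Step~2 you invoke a ``parametrized Nakayama'' over the ring of germs along $\{\zero\}\times[0,1]$; this is the right idea, but note that this ring is not local (its maximal ideals correspond to points $(0,t_0)$ for each $t_0\in[0,1]$), so the usual one-line Nakayama does not apply directly. The honest way to carry this out is either to work germ-wise at each $t_0$ and use compactness of $[0,1]$ to patch, or --- more in the spirit of Mather --- to observe that $m_n^{k+1}\subset m_n^2 J_{h_t}+m_n^{k+2}$ together with $m_n^{k+2}\subset m_n\cdot m_n^{k+1}$ lets one iterate and solve modulo $m_n^N$ for all $N$, then appeal to a finite-dimensional argument (or Malgrange preparation) for the smooth solution. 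You flag this as ``the main obstacle,'' which is accurate; just be aware that calling it Nakayama is a slight abuse.
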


\begin{prop} 
  \label{change-harm}
Any two elements in $\Harm_k(\RR^2)\setminus\set{\zero}$ are right equivalent by a
  conformal diffeomorphism-germ.

Furthermore,
\[
  \set{\phi \colon (\RR^2,\zero) \to (\RR^2,\zero); \phi \in \GL_2(\RR),
  f_k \circ \phi = f_k} = \langle \begin{pmatrix}
    \cos \frac{2\pi}{k} & -\sin \frac{2\pi}{k} \\
    \sin \frac{2\pi}{k} & \cos \frac{2\pi}{k}
  \end{pmatrix},
  \begin{pmatrix}
    1 & 0 \\ 0 & -1
  \end{pmatrix}
  \rangle.
\]
\end{prop}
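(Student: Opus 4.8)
The first assertion is immediate from Proposition~\ref{classification-harmonic}: if $h_1,h_2\in\Harm_k(\RR^2)\setminus\set{\zero}$ then each $h_i$ is homogeneous of degree $k$, hence of order $k$, so by Proposition~\ref{classification-harmonic} there are linear conformal diffeomorphism-germs $\phi_1,\phi_2$ with $h_i\circ\phi_i=f_k$ $(i=1,2)$; consequently $h_1$ and $h_2$ are right-equivalent via the conformal diffeomorphism-germ $\phi_2\circ\phi_1^{-1}$.

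For the second assertion I would pass to complex coordinates; here I take $k\ge3$ (the assertion in fact fails for $k\le2$, but those cases do not occur below). Identify $(\RR^2,\zero)$ with $(\CC,0)$ via $z=x+iy$, so that $f_k=\Re(z^k)$, and write an arbitrary $\phi\in\GL_2(\RR)$ as $z\mapsto\alpha z+\beta\bar z$ with $\alpha,\beta\in\CC$ uniquely determined by $\phi$; the Jacobian determinant of $\phi$ equals $|\alpha|^2-|\beta|^2$, so $\phi$ is invertible iff $|\alpha|\ne|\beta|$. In these terms the two generators in the statement are $z\mapsto e^{2\pi i/k}z$ (the rotation $R_{2\pi/k}$) and $z\mapsto\bar z$ (the reflection $\mathrm{diag}(1,-1)$), and since $\Re((e^{2\pi i/k}z)^k)=\Re(z^k)$ and $\Re(\bar z^{\,k})=\Re(\overline{z^k})=\Re(z^k)$, while a stabilizer is a subgroup, the inclusion $\supseteq$ is clear. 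The point is the reverse inclusion.

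So let $\phi\in\GL_2(\RR)$ satisfy $f_k\circ\phi=f_k$; I claim $\phi$ lies in the group generated by $R_{2\pi/k}$ and $\mathrm{diag}(1,-1)$. First, $\phi$ must be conformal, i.e.\ $\alpha\beta=0$: since $f_k\circ\phi=f_k$ is harmonic, $\Delta(f_k\circ\phi)=0$, while the Wirtinger identity $\Delta=4\,\del_z\del_{\bar z}$ gives
\[
  \Delta\big(f_k\circ\phi\big)=\Re\Big(4k(k-1)\,\alpha\beta\,(\alpha z+\beta\bar z)^{k-2}\Big).
\]
Because $\phi$ is a bijection of $\CC$, the quantity $u:=\alpha z+\beta\bar z$ takes every value in $\CC$, so $\Re\big(4k(k-1)\,\alpha\beta\,u^{k-2}\big)=0$ for all $u\in\CC$, and since $k\ge3$ this forces $\alpha\beta=0$. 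If $\beta=0$, then $f_k\circ\phi=\Re(\alpha^kz^k)=f_k$, which by linear independence of $\Re(z^k)$ and $\Im(z^k)$ forces $\alpha^k=1$; thus $\phi$ is $z\mapsto e^{2\pi ij/k}z=(R_{2\pi/k})^{\,j}$ for some $j$. If $\alpha=0$, then $f_k\circ\phi=\Re(\beta^k\bar z^{\,k})=\Re(\bar\beta^{\,k}z^k)=f_k$ forces $\bar\beta^{\,k}=1$, so $\phi$ is $z\mapsto e^{2\pi ij/k}\bar z$, i.e.\ $(R_{2\pi/k})^{\,j}$ followed by $z\mapsto\bar z$. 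In either case $\phi\in\langle R_{2\pi/k},\ \mathrm{diag}(1,-1)\rangle$, which gives the reverse inclusion (and identifies this group, of order $2k$, as the full stabilizer).

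I expect the conformality step to be the only real obstacle, and I would want to flag there that it genuinely uses $k\ge3$: for $k=2$ the displayed identity only forces $\Re(\alpha\beta)=0$, and indeed the stabilizer of $f_2=x^2-y^2$ in $\GL_2(\RR)$ is infinite (it contains the hyperbolic rotations $z\mapsto(\cosh t)z+i(\sinh t)\bar z$), while for $k=1$ it is two-dimensional. Everything else — the Wirtinger formula for $\Delta$ and the root-of-unity bookkeeping — is routine.
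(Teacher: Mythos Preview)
Your argument is correct. The first assertion is handled exactly as in the paper, via Proposition~\ref{classification-harmonic}.

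For the stabilizer computation your route differs from the paper's. The paper argues geometrically: since $f_k\circ\phi=f_k$, the map $\phi$ must permute the zero set $f_k^{-1}(\zero)$, which consists of $k$ lines through the origin at equal angles $\pi/k$; hence $\phi$ lies, up to a positive scalar, in the dihedral group generated by $R_{2\pi/k}$ and $\mathrm{diag}(1,-1)$, and homogeneity then forces the scalar to be $1$. Your approach instead forces conformality analytically by applying the Laplacian in Wirtinger form to $f_k\circ\phi$ and reading off $\alpha\beta=0$, after which the $k$th-root-of-unity bookkeeping finishes. Your argument is more self-contained and makes explicit exactly where $k\ge 3$ enters (and why the statement fails for $k\le 2$), whereas the paper's line-permutation step implicitly uses $k\ge 3$ to rule out non-orthogonal linear maps and is somewhat sketchy about the scalar. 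One small slip: the map $z\mapsto e^{2\pi ij/k}\bar z$ is conjugation \emph{followed by} the rotation $(R_{2\pi/k})^{j}$, not the other way around; this does not affect the conclusion since both orders lie in the generated group.
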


\begin{proof}
Let $h_1, h_2 \in \Harm_k(\RR^2)\setminus\set{\zero}$.
Then, by \propname \ \ref{classification-harmonic}, there exists a linear conformal
diffeomorphism-germ $\phi_1, \phi_2 \colon (\RR^2,\zero) \to (\RR^2,\zero)$ such that
$h_1 \circ \phi_1 = f_k$ and $h_2 \circ \phi_2 = f_k$.
Then, we obtain $h_2 = h_1 \circ \phi_1 \circ \phi_2^{-1}$.

Since $\phi_1 \circ \phi_2^{-1}$ is a conformal diffeomorphism-germ, first statement is proved.

Let $\phi \in \GL_2(\RR)$ and suppose that $f_k \circ \phi = f_k$.
Then $\phi(f_k^{-1}(\zero)) = f_k^{-1}(\zero)$.

Recall that $f_k = \Re (x + iy)^k$.
Then, $f_k^{-1}(\zero)$ is $k$ lines splitting plant to $2k$ parts for same angle.

There exists $c \in \RR_{\ge 0} \st$
\[
  \phi \in c\langle \begin{pmatrix}
    \cos \frac{2\pi}{k} & -\sin \frac{2\pi}{k} \\
    \sin \frac{2\pi}{k} & \cos \frac{2\pi}{k}
  \end{pmatrix},
  \begin{pmatrix}
    1 & 0 \\ 0 & -1
  \end{pmatrix}
  \rangle.
\]
However, $f_k(x, -y) = f_k \circ R_{\frac{2\pi}{k}} = f_k$.
Therefore, we obtain the statement.
\end{proof}

\begin{prop} \label{low-degree-determinancy}
Let $k$ be a natural number such that $k \le 7$.
Then $f_k$ is $\R_{\max (k, 2k-4)}$-determined.
\end{prop}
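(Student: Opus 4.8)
The plan is to deduce Proposition~\ref{low-degree-determinancy} from the finite determinacy theorem (see \cite{Mother-3}): it is enough to establish the inclusion $m_2^{\,N+1}\subseteq m_2^2\,J_{f_k}$ with $N=\max(k,2k-4)$, for then the theorem gives at once that $f_k$ is $\R_N$-determined. This inclusion is a touch finer than what Proposition~\ref{judge-determinancy} would require; as explained at the end, for $k\ge 4$ the cheaper inclusion $m_2^{\,N}\subseteq m_2 J_{f_k}+m_2^{\,N+1}$ actually fails, so one genuinely has to argue one degree higher.

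First I would compute $J_{f_k}$. Since $f_k=\Re(x+iy)^k$ and $\Re$ is $\RR$-linear, $\frac{\del}{\del x}f_k=k\,\Re(x+iy)^{k-1}=k f_{k-1}$ and $\frac{\del}{\del y}f_k=k\,\Re\big(i(x+iy)^{k-1}\big)=-k\,\Im(x+iy)^{k-1}=-k g_{k-1}$, so $J_{f_k}=\langle f_{k-1},g_{k-1}\rangle_{\E_2}$. Passing to the complexification with $z=x+iy$, $\bar z=x-iy$ one has $f_{k-1}\pm i g_{k-1}=z^{k-1},\bar z^{k-1}$, hence $J_{f_k}\otimes_\RR\CC=\langle z^{k-1},\bar z^{k-1}\rangle$ and $m_2^2 J_{f_k}\otimes_\RR\CC=\langle z^{k+1},\,z^k\bar z,\,z^{k-1}\bar z^2,\,z^2\bar z^{k-1},\,z\bar z^k,\,\bar z^{k+1}\rangle$, a monomial ideal.

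The combinatorial heart of the matter is then the following. Any monomial $z^a\bar z^b$ of degree $a+b\ge N+1=\max(k+1,2k-3)$ has $\max(a,b)\ge k-1$; assuming $a\ge k-1$, split off a factor $z^{k-1}$ and then a factor of degree $2$ from what remains --- its degree is $a+b-(k-1)\ge\max(2,k-2)\ge2$ --- and $z^a\bar z^b$ is displayed as a multiple of one of the six generators. Hence every monomial of degree $\ge N+1$ lies in $m_2^2 J_{f_k}\otimes_\RR\CC$, and, $f_k$ being homogeneous, this descends to $m_2^{\,N+1}\subseteq m_2^2 J_{f_k}$ over $\RR$, so $f_k$ is $\R_N$-determined. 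For $k\le 3$ (so $N=k$, $N+1=k+1$) the same inclusion gives $\R_k$-determinacy; these small cases are anyway the classical ones --- $f_1=x$, $f_2=x^2-y^2$, and $f_3$ (a $D_4^-$ singularity) are $\R_1$-, $\R_2$- and $\R_3$-determined by the implicit function theorem, the Morse lemma, and the standard $D_4$ computation.

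The step that is not automatic --- and the reason the determinacy degree is $2k-4$ rather than the $2k-3$ that comes for free --- is the disappearance in degree $2k-3$ of the ``diagonal'' monomial $z^{k-2}\bar z^{k-2}=(x^2+y^2)^{k-2}$: this rotation-invariant polynomial of degree $2k-4$ has both exponents equal to $k-2$ and so is divisible by no generator of $J_{f_k}$, whence $m_2^{\,2k-4}\not\subseteq m_2 J_{f_k}+m_2^{\,2k-3}$. Clearing this obstruction is exactly why one must go up to degree $2k-3$ --- where no monomial can have both exponents $\le k-2$ --- and multiply $J_{f_k}$ by $m_2^2$ rather than $m_2$. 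After that the argument is routine bookkeeping, and the hypothesis $k\le 7$ is only used to stay in the range of the rest of the paper; the same inclusion holds for every $k$.
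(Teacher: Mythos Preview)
Your argument is correct, and it is both cleaner and more general than the paper's. The paper proceeds case by case: for $k\le 3$ it invokes the implicit function theorem, the Morse lemma, and a direct check of Proposition~\ref{judge-determinancy}; for $k=4$ it first uses Proposition~\ref{judge-determinancy} to obtain only $\R_5$-determinacy from $m_2^5\subset m_2 J_{f_4}+m_2^6$, and then improves this to $\R_4$-determinacy by a separate ad hoc step showing that any homogeneous degree-$5$ term can be absorbed by a diffeomorphism; for $k=5,6,7$ it simply asserts that ``the same way'' works. You bypass this two-step improvement entirely by invoking the sharper Mather criterion $m_2^{\,N+1}\subseteq m_2^2 J_{f_k}\Rightarrow f_k$ is $\R_N$-determined, and by passing to complex coordinates --- where $J_{f_k}$ becomes the monomial ideal $\langle z^{k-1},\bar z^{k-1}\rangle$ --- you reduce the required inclusion to a transparent pigeonhole count that is uniform in $k$ and in fact valid for all $k$, not only $k\le 7$. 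You also correctly pinpoint why the paper cannot reach $2k-4$ in one step with its own criterion: the rotation-invariant element $(x^2+y^2)^{k-2}=z^{k-2}\bar z^{k-2}$ genuinely obstructs $m_2^{\,2k-4}\subset m_2 J_{f_k}+m_2^{\,2k-3}$. One very minor quibble: the phrase ``$f_k$ being homogeneous'' is not what makes the descent from $\CC$ to $\RR$ work --- inclusion of complexified ideals always implies inclusion of the underlying real ideals, by taking real parts --- but this does not affect the argument.
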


\begin{proof}
If $k=1$ then $f_1 = x$.
So by implicit theorem, for all $R_2 \in \m_2^2$, $f_1 + R_2$ is right-equivalent to $f_1$.

If $k=2$ then $f_2 = x^2 - y^2$.
So by Morse Lemma, for all $R_3 \in \m_2^3$, $f_2 + R_3$ is right-equivalent to $f_2$.

If $k=3$ then $f_3 = 3(x^2y - xy^2)$.
It is straightforward to obtain that
\[
  m_2^3 \subset m_2 J_{f_3} + m_2^4.
\]
So, $f_3$ is $\R_3$-determined by using \propname \ \ref{judge-determinancy}.

If $k=4$, $f_4 = x^4 - 6x^2y^2 + y^4$.
It is straightforward to obtain that
\[
  m_2^5 \subset m_2 J_{f_4} + m_2^6.
\]

Therefore by \propname \ \ref{judge-determinancy}, $f_4$ is $\R_5$-determined.

Furthermore for any homogeneous polynomial $p$ of degree 5, there exists a diffeomorphism germ
  $\phi \colon (\RR^2, \zero) \to (\RR^2, \zero)$ such that
  $j^5 (f_4 + p)(\zero) = j^5 (f_4 \circ \phi)(\zero)$.

Thus, we have that $f_4$ is $\R_4$-determined.

For $k=5, 6$ and $7$, we can prove that $g_k$ is $\R_{6}, \R_{8}, \R_{10}$-determined for the same
way.
\end{proof}

\begin{proof} [\proofname\  of \theoname \ \ref{main-5}]
(\ref{main-5-change_coord})
By \propname\ \ref{classification-harmonic}, there exists a linear conformal diffeomorphism-germ
  $\phi \colon (\RR^2,\zero) \to (\RR^2,\zero)$ such that $h_5 \circ \phi = f_5$.
    Then, $(h_5 + R_6)\circ \phi = f_5 + R_6 \circ \phi$ and $\ord(R_6 \circ \phi) \ge 6$.
    So, by taking $\tilde{R}_6$ as $R_6 \circ \phi$, we obtain a statement.

(\ref{main-5-6})
We define a local diffeomorphism germ $\phi \colon (\RR^2,\zero) \to (\RR^2,\zero)$ by
\begin{align*}
  \phi(x, y) &= (
    x
      + x^2(\frac{1}{5}a_6 + \frac{1}{25}a_4 + \frac{1}{25}a_2)
      + xy(\frac{1}{20}a_5 + \frac{1}{20}a_3 + \frac{1}{20}a_1)
      - y^2(\frac{a_6}{5}), \\
    & \quad y
      + x^2 (\frac{1}{80}a_5 + \frac{1}{80}a_3 + \frac{1}{16}a_1)
      - xy (\frac{7}{20}a_6 + \frac{3}{50}a_4 + \frac{1}{100}a_2) \\
    & \quad \quad - y^2 (\frac{1}{80}a_1 + \frac{1}{80}a_3 + \frac{1}{16}a_5)
).
\end{align*}.

This $\phi$ is obtained by a following way:
Let us consider the local-diffeomorphism
  \[
    \phi(x, y) = (x + s_1x^2 + s_2xy + s_3y^2, y + s_4x^2 + s_5xy + s_6y^2).
  \]

Consider equations which are obtained by setting that $x^5y, x^4y^2, x^3y^3, x^2y^4, xy^5, y^6$
  terms of $(f_5 + \rho_6)
  \circ \phi$ are equal to zero.

By solving the linear equiation of equation, we can determine $s_j(j=1, 2, 3, 4, 5, 6)$ to find
  $\phi$.

Then, by straightforward computation, we obtain
\[
  (f_5 + \rho_6) \circ \phi \cong_{j^6}
    f_5 + x^6(a_0 + a_6 + \frac{a_2 + a_4}{5})
\]

and
\[
  a_0 + a_6 + \frac{a_2 + a_4}{5} = \frac{1}{6!} \times (\Delta^3 \rho_6).
\]

(\ref{main-5-uniqueness})
By \propname \ \ref{change-harm} and the procedure to construct the
  above diffeomorphism $\phi$, all diffeomorphism $\psi$ giving the right equivalence of
  $f_5 + cx^6$ and $f_5 + \tilde{c}x^6$ has the following form:
\[
  \psi(x, y) = \phi \circ \phi_L(x, y),
\]
where, $\phi_L$ is a composition of $e^{\frac{2\pi i}{k}}$ and $\bar{z}$ and $\phi$ is a
  diffeomorphism such that
  \[
    \psi(\phi_L(f_5 + cx^6)) \cong_{j^6} f_5 + \frac{x^6}{6!} \times \Delta^3\phi_L(cx^6).
  \]

Since $\phi_L$ preserves Laplacian, we have
\[
  \Delta^3\phi_L(cx^6) = \Delta^3 cx^6 = 6! c.
\]
This proves \theoname \ \ref{main-5}.
\end{proof}

\begin{proof} [\proofname\ of \theoname\ \ref{main-6}]
(\ref{main-6_change_leading})
By \propname\ \ref{classification-harmonic}, there exists a linear conformal diffeomorphism-germ
  $\phi \colon (\RR^2,\zero) \to (\RR^2,\zero)$ such that $h_6 \circ \phi = g_6$.
    Then, $(h_6 + R_7)\circ \phi = g_6 + R_7 \circ \phi$ and $\ord(R_7 \circ \phi) \ge 7$.
    So, by taking $\tilde{R}_7$ as $R_7 \circ \phi$, we obtain a statement.

(\ref{main-6_7})
Let $\rho_7 = \sum_{j=0}^7 a_jx^{7-j}y^j$.

Define the local diffeomorphism $\phi \colon (\RR^2,\zero) \to (\RR^2,\zero)$ by

\begin{align*}
  \phi(x, y) &= (
    x
      + x^2(\frac{1}{48}a_3 + \frac{1}{24}a_5 + \frac{5}{16}a_7) \\
    & \quad\quad + xy(\frac{5}{336}a_2 +  \frac{5}{168}a_4 + \frac{19}{336}a_6)
      - y^2(\frac{a_7}{6}), \\
    &\quad y + x^2 (\frac{1}{42}a_2 + \frac{1}{70}a_4 + \frac{1}{42}a_6)
      - xy(\frac{1}{240}a_3 + \frac{1}{24}a_5 + \frac{19}{48}a_7) \\
    & \quad\quad - y^2(\frac{1}{336}a_2 + \frac{1}{168}a_4 + \frac{5}{112}a_6)).
\end{align*}

Then, by a straightforward computation
\[
  (g_6 + \rho_7) \circ \phi \cong_{j^7}
    g_6 + x^7(a_0 + \frac{3}{35}a_4 + \frac{a_2 + a_6}{7}) + x^6y(a_1 + \frac{3}{5}a_3 + a_5 + 7a_7)
    .
\]
Furthermore
\begin{align*}
  a_0 + \frac{3}{35}a_4 + \frac{a_2 + a_6}{7} &= \frac{1}{7!} \times (\frac{\del}{\del x}
    \Delta^3 \rho_7), \\
  a_1 + \frac{3}{5}a_3 + a_5 + 7a_7 &= \frac{7}{7!} \times (\frac{\del}{\del y} \Delta^3 \rho_7).
\end{align*}

(\ref{main-6_8})
Let $\rho_8 = \sum_{j=0}^8 a_j x^{8-j}y^j$.
Let us define a local diffeomorphism $\phi \colon (\RR^2,\zero) \to (\RR^2,\zero)$ by
\begin{align*}
\phi(x, y)
  &= (
    x
      - x^3(\frac{5}{768}a_1 + \frac{5}{768}a_3 + \frac{1}{256}a_5 + \frac{5}{768}a_7 ) \\
    &\quad\quad + x^2y(\frac{5}{336}a_2 + \frac{5}{168}a_4  + \frac{19}{336}a_6 + \frac{5}{12}a_8 )
      \\
    &\quad\quad + xy^2(\frac{25}{768}a_1 + \frac{5}{256}a_3 + \frac{25}{768}a_5 + \frac{47}{768}a_7)
      \\
    &\quad\quad - y^3(\frac{1}{6}a_8), \\
  & y
    + x^3(\frac{1}{42}a_2  + \frac{1}{70}a_4 + \frac{1}{42}a_6 +  \frac{1}{6}a_8) \\
  & \quad\quad+ x^2y(\frac{47}{768}a_1 + \frac{25}{768}a_3 + \frac{5}{256}a_5 +  + \frac{25}{768}a_7
    ) \\
  & \quad\quad - xy^2(\frac{5}{12}a_8 + \frac{5}{112}a_6 + \frac{1}{168}a_4 + \frac{1}{336}a_2) \\
  & \quad\quad - y^3(\frac{5}{768}a_1 + \frac{1}{256}a_3 + \frac{5}{768}a_5 + \frac{35}{768}a_7)).
\end{align*}
Then, by a straightforward computation, we obtain
\begin{align*}
(g_6 + \rho_7 + \rho_8) \circ \phi
  & \cong_\R g_6 + \rho_7 + \frac{x^8}{35} (a_0 + 5a_2 + 3a_4 + 5a_6 + 35a_8) \\
  & = g_6 + \rho_7 + \frac{x^8}{8!} \times \Delta^4 \rho_8.
\end{align*}
\theoname \ \ref{main-6} is proved.
\end{proof}

\begin{proof}[\proofname\ of \theoname\ \ref{main-7}]
(\ref{main-7_change_leading})
By \propname\ \ref{classification-harmonic}, there exists a linear conformal  diffeomorphism-germ
  $\phi \colon (\RR^2,\zero) \to (\RR^2,\zero)$ such that $h_7 \circ \phi = g_7$.
    Then, $(h_7 + R_8)\circ \phi = g_7 + R_8 \circ \phi$ and $\ord(R_8 \circ \phi) \ge 8$.
    So, by taking $\tilde{R}_8$ as $R_8 \circ \phi$, we obtain a statement.

(\ref{main-7_8})
Let $\rho_8 = \sum_{j=0}^8 a_jx^{8-j}y^j$.

Let the local diffeomorphism $\phi \colon (\RR^2,\zero) \to (\RR^2,\zero)$ be defined by
\begin{align*}
\phi(x, y)
&:=
  (x
    + x^2(\frac{5}{448}a_3 + \frac{3}{224}a_5 + \frac{15}{448}a_7) \\
    & - xy(\frac{1}{490}a_4 + \frac{3}{98}a_6 + \frac{3}{7}a_8)
    - y^2(\frac{3}{3136}a_3 + \frac{5}{1568}a_5 + \frac{15}{448}a_7), \\
  & y
    - x^2(\frac{3}{245}a_4 + \frac{2}{49}a_6 + \frac{3}{7}a_8) \\
    & - xy(\frac{9}{1568}a_3 + \frac{15}{784}a_5 + \frac{13}{224}a_7)
    + y^2(\frac{a_8}{7})).
\end{align*}

Then, by a direct computation,
\begin{align*}
  (g_7 + \rho_8)\circ \phi &\cong_{j^8}
      g_6 + \frac{x^7}{7!} \times (\frac{\del}{\del x}\Delta^3 \rho_7)
      + \frac{7x^6y}{7!} \times (\frac{\del}{\del y}\Delta^3 \rho_7) \\
      & \quad + \frac{28x^6y^2}{8!} \times (\frac{\del^2}{\del y^2}\Delta^3 \rho_8).
\end{align*}

(\ref{main-7_9})
Let $\rho_9 = \sum_{j=0}^9 a_jx^{9-j}y^j$.

Let local diffeomorphism $\phi \colon (\RR^2,\zero) \to (\RR^2,\zero)$ be defined by
\begin{align*}
\phi(x, y)
&:=
  (x + x^3(\frac{5}{448}a_3 + \frac{3}{224}a_5 + \frac{15}{448}a_7 + \frac{5}{16}a_9) \\
    & \quad + x^2y(\frac{23}{1344}a_2 + \frac{29}{1568}a_4 + \frac{65}{3136}a_6 + \frac{17}{336}a_8)
      \\
    & \quad - xy^2(\frac{3}{3136}a_3 + \frac{5}{1568}a_5 + \frac{105}{3136}a_7 + \frac{51}{112}a_9)
      \\
    & \quad - y^3(\frac{5}{4032}a_2 + \frac{1}{672}a_4 + \frac{5}{1344}a_6 + \frac{5}{144}a_8), \\
  & y + x^3(\frac{1}{63}a_2 + \frac{1}{147}a_4 + \frac{1}{147}a_6 + \frac{1}{63}a_8) \\
    & \quad - x^2y(\frac{9}{1568}a_3 + \frac{15}{784}a_5 + \frac{13}{224}a_7 + \frac{33}{56}a_9) \\
    & \quad - xy^2(\frac{5}{672}a_2 + \frac{1}{112}a_4 + \frac{5}{224}a_6 + \frac{11}{168}a_8)
    + y^3(\frac{a_9}{7})
).
\end{align*}

Then, by a straightforward computation,
\begin{align*}
(g_7 + \rho_8 + \rho_9) \circ \phi \cong_{j^9}
    g_7 + \rho_8
    + \frac{x^9}{9!} \times (\frac{\del}{\del x}\Delta^4 \rho_9)
    + \frac{9 x^8y}{9!} \times (\frac{\del}{\del y} \Delta^4 \rho_9)
\end{align*}

(\ref{main-7_10})
Let $\rho_{10} = \sum_{j=0}^{10} a_jx^{10-j}y^j$.

Let local diffeomorphism $\phi \colon (\RR^2,\zero) \to (\RR^2,\zero)$ be defined by

\begin{align*}
\phi(x,y) &=
  (x
     - x^4(\frac{9}{256}a_1 + \frac{1}{256}a_3 + \frac{3}{1792}a_5
     + \frac{3}{1792}a_7) + \frac{1}{256}a_9 \\
  &+ x^3y(\frac{23}{1344}a_2 + \frac{29}{1568}a_4 + \frac{65}{3136}a_6 + \frac{17}{336}a_8
    + \frac{209}{448}a_{10}) \\
  & + x^2y^2(\frac{51}{896}a_1 + \frac{3}{128}a_3 + \frac{19}{128}a_5 +
    \frac{3}{128}a_7 + \frac{51}{896}a_9 ) \\
  & -xy^3(\frac{5}{4032}a_2 + \frac{1}{672}a_4 + \frac{5}{4032}a_6
    +\frac{5}{144}a_8  + \frac{209}{448}a_{10}) \\
  & -y^4(\frac{1}{256}a_1 + \frac{3}{1792}a_3 + \frac{3}{1792}a_5 + \frac{1}{256}a_7
     + \frac{9}{256}a_9), \\
  & y
    + x^4(\frac{1}{63}a_2 + \frac{1}{147}a_4 + \frac{1}{147}a_6
      \frac{1}{63}a_8 + \frac{1}{7}a_{10}) \\
  & + x^3y(\frac{61}{896}a_1 + \frac{3}{128}a_3
    + \frac{9}{896}a_5 + \frac{9}{896}a_7 + \frac{3}{128}a_9) \\
  & - x^2y^2(\frac{5}{672}a_2 + \frac{1}{112}a_4 +\frac{5}{224}a_6 + \frac{11}{168}a_8
      + \frac{21}{32}a_{10}) \\
  & - xy^3(\frac{3}{128}a_1 + \frac{9}{896}a_3 + \frac{9}{896}a_5 + \frac{3}{128}a_7
    + \frac{61}{896}a_9 ) \\
  & + y^4(\frac{a_{10}}{7})).
\end{align*}

Then, by a direct computation,
\begin{align*}
(g_7 + \rho_8 + \rho_9 + \rho_{10}) \circ \phi \cong_{j^{10}}
    g_7 + \rho_8 + \rho_9 + \frac{x^{10}}{10!} \times (\Delta^5 \rho_{10}).
\end{align*}
Thus, we have \theoname \ \ref{main-7}.
\end{proof}



\end{document}